\documentclass{article}

\usepackage{amssymb}
\usepackage{amsmath}
\usepackage{amsthm}

\newtheorem{theorem}{Theorem}
\newtheorem{lemma}{Lemma}[section]
\newtheorem{propo}{Proposition}[section]
\newtheorem{claim}{Claim}[section]
\theoremstyle{definition}

\newtheorem{remark}{Remark}[section]


\newcommand{\ep}{\varepsilon}

\newcommand{\al}{\alpha}

\newcommand{\e}{\ep}


\newcommand{\N}{\mathbb{N}}
\newcommand{\R}{\mathbb{R}}

\newcommand{\B}{\textbf{B}}

\newcommand{\cG}{\mathcal{G}}

\newcommand{\BGrd}[1][d]{\textrm{\textbf{BGr}}_{#1}}


\newcommand{\OI}{\{0,1\}}
\newcommand{\defeq}{\stackrel{\mbox{\scriptsize def}}{=}}

\title{Borel oracles. An analytical approach to constant-time algorithms.
\footnote{AMS
Subject Classification: Primary 68R10, Secondary 03E15 ,
\, Research sponsored by OTKA Grant No. 67867, No. 69062}}

\author{G\'abor Elek, G\'abor Lippner}

\sloppy
\begin{document}
\maketitle

\begin{abstract} In \cite{NO},
Nguyen and Onak constructed the first 
constant-time algorithm
for the approximation of the size of the
maximum matching in bounded degree graphs.
The Borel oracle machinery is a tool that can be used
to convert some statements in
Borel graph theory to theorems in the field of constant-time  
algorithms. 
In this paper we illustrate the power of this tool to prove the existence of 
the above mentioned
constant-time approximation algorithm.
\end{abstract}
\vskip 0.2in
\noindent{\bf Keywords:} constant time algorithms, graph limits,
Borel graphs,
invariant measures, maximum matching

\section{Introduction}
\subsection{Borel graphs}
Borel graphs were introduced by Kechris, Solecki and Todorcevic in
\cite{KST}.  Let $X$ be a standard Borel space, say the $[0,1]$
interval, with its Borel structure. Let $E\subset X^2$ be a Borel graph
relation, that is a Borel subset, such that $(x,y)\in E$ implies
$(y,x)\in E$ and $(x,x)\notin E$ for any $x\in X$. The $\cG=(X,E)$
system is called a Borel graph. In this paper we are interested in
Borel graphs in which all the degrees are bounded. Kechris, Solecki
and Todorcevic proved that in such graphs there exists a maximal
independent Borel subset and using that they proved that if the
degrees are bounded by $d$ then there exists a Borel coloring $c:X\to
\{1,2,\dots,d+1\}$ of the vertices. In order to appreciate this result
let us note that the analogue of Vizing's theorem, the existence of a
Borel edge-coloring $c:E \to \{1,2,\dots,d+1\}$ is still
unknown. Also, there exist some Borel trees with countable degrees
such that they do not have countable Borel colorings (of course they
have non-Borel colorings with only two colors). 

\noindent
A Borel matching in $\cG(X,E)$ is a Borel set $M\subset E$ such that all of its
components are single edges. For a matching $M$ we shall denote by
$X_M$ the Borel subset of $X$ that consists of those points that are
matched by $M$. An augmenting path for $M$ is a finite 
path $x_0,x_1,\dots, x_{2k+1}$ such that
\begin{itemize}
\item $(x_{2j}x_{2j+1}) \notin M$ and $x_{2j+1}x_{2j+2} \in M$ for all $0\leq
  j \leq k$.
\item The two endpoints $x_0,x_{2k+1} \notin X_M$.
\end{itemize}
We shall say that the ``length'' of an augmenting path $x_0,x_1,\dots,
x_{2k+1}$ is $k$, and we shall often refer to it as a $k$-augmenting
path.  One can also define a (one ended) infinite augmenting path in
the same way.  If $M$ is a matching and $P$ is the edge-set of an
augmenting path, then $M' = M \triangle P$ is also a matching, in fact
it is larger in the sense that $X_{M} \subsetneq X_{M'}$. The matching $M'$
will often be refered to as $M$ 
improved by $P$.


Our result is about the existence of Borel matchings without short 
augmenting paths.
\begin{propo}~\label{borelthm}
Let $T > 0$. Then any bounded-degree Borel graph
contains a Borel matching $M$ that has no $k$-augmenting paths for
any $k \leq T$.
\end{propo}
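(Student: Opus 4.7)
The plan is to proceed by induction on $T$, mimicking the classical Hopcroft--Karp scheme for matching in the Borel setting.

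For the base case $T=1$, a \emph{maximal} Borel matching has no $1$-augmenting paths. Such a matching is obtained by applying the Kechris--Solecki--Todorcevic theorem to the Borel line graph of $\cG$ (whose vertices are the edges of $\cG$ and in which two edges are adjacent iff they share an endpoint); this is itself a Borel graph of bounded degree, so it has a maximal Borel independent set, which corresponds to a maximal Borel matching of $\cG$.

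For the inductive step, suppose we have a Borel matching $M_k$ with no augmenting paths of length $\leq k$. The set $\mathcal{A}_{k+1}$ of $(k+1)$-augmenting paths of $M_k$ is a Borel subset of $X^{2k+3}$, since being an alternating path with unmatched endpoints is expressible by finitely many Borel conditions on the coordinates. Form the ``conflict graph'' $H_{k+1}$ on $\mathcal{A}_{k+1}$ by joining two paths whenever they share a vertex of $\cG$. Because $\cG$ has degree at most $d$, each vertex of $X$ lies on only boundedly many paths in $\mathcal{A}_{k+1}$, so $H_{k+1}$ is a Borel graph of bounded degree. Applying the KST theorem to $H_{k+1}$ we get a maximal Borel independent set $\mathcal{I}_{k+1} \subset \mathcal{A}_{k+1}$, i.e.\ an inclusion-maximal family of pairwise vertex-disjoint $(k+1)$-augmenting paths. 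Set
$$M_{k+1} \;=\; M_k \,\triangle \bigcup_{P \in \mathcal{I}_{k+1}} E(P),$$
which is again a Borel set (hence a Borel matching), and can be thought of as $M_k$ improved simultaneously by all paths in $\mathcal{I}_{k+1}$.

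The essential claim is that $M_{k+1}$ has no augmenting paths of length $\leq k+1$. This is exactly the classical Hopcroft--Karp ``shortest augmenting path'' lemma: if a matching has no augmenting paths of length $<\ell$ and one flips a maximal vertex-disjoint family of augmenting paths of length exactly $\ell$, then the resulting matching has no augmenting paths of length $\leq \ell$. The proof is entirely local—one assumes a short augmenting path $Q$ of $M_{k+1}$, analyses its intersection with the flipped paths $P \in \mathcal{I}_{k+1}$, and obtains a contradiction via a length/parity count—so it transfers verbatim to the Borel setting. Iterating the construction $T$ times yields the desired matching $M = M_T$. The only real friction in writing this up will be the routine definability checks for $\mathcal{A}_{k+1}$, $H_{k+1}$ and the flip operation; the combinatorial content of the argument is already present in the classical finite proof.
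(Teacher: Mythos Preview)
Your Hopcroft--Karp style induction is essentially correct and gives a genuinely different proof from the paper's. One small slip: in the paper's convention a $k$-augmenting path has $2k+1$ edges, so a maximal matching only rules out $0$-augmenting paths, not $1$-augmenting paths; your induction should start at $k=0$. With that adjustment the argument goes through: the conflict graph $H_{k+1}$ on the Borel set $\mathcal{A}_{k+1}\subset X^{2k+3}$ is a bounded-degree Borel graph, KST supplies a maximal Borel independent set $\mathcal{I}_{k+1}$, and the Hopcroft--Karp phase lemma (whose proof is purely local, involving only a putative short $M_{k+1}$-augmenting path $Q$ and the finitely many members of $\mathcal{I}_{k+1}$ it meets) shows that $M_{k+1}$ has no $j$-augmenting paths for $j\leq k+1$. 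The only point worth an extra line is that the flipped edge set $\bigcup_{P\in\mathcal{I}_{k+1}}E(P)$ is Borel; this holds because each edge of $\cG$ lies on only boundedly many paths in $\mathcal{A}_{k+1}$, so the relevant projections are finite-to-one.

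The paper takes a rather different route. Instead of inducting on the length of the forbidden augmenting paths, it fixes $T$ once and for all, uses KST to colour the vertices of $\cG$ with finitely many colours so that same-coloured vertices have disjoint $4T$-neighbourhoods, and then cycles through the colour classes infinitely often, at each stage exhaustively improving the current matching inside the $4T$-balls around the vertices of the current colour class. A local stabilization argument (each improvement strictly increases the number of matched vertices in a fixed finite ball, so only finitely many improvements can touch any given edge) shows that the sequence converges edgewise to a Borel matching without short augmenting paths. Your approach is cleaner and more conceptual, leaning on a classical algorithmic lemma to finish in exactly $T+1$ rounds; the paper's approach is more hands-on and avoids quoting the Hopcroft--Karp phase lemma, trading that for an infinite iterative process whose convergence must be argued separately.
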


One might think that the previous proposition implies the existence of 
Borel matchings
without {\it any} finite augmenting paths. Nevertheless, we have a 
counterexample.
\begin{propo}\label{borelthm2} There exists a Borel graph $\cG$ such that all
the Borel matchings of $\cG$ contain augmenting paths of finite length.
\end{propo}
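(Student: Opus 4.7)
The plan is to construct a specific Borel graph $\cG$ and verify that every Borel matching of $\cG$ admits a finite augmenting path. Note first that this property is strictly stronger than the non-existence of a Borel perfect matching: a Borel matching can leave vertices unmatched while having only infinite augmenting paths, as illustrated by the ``one-sided'' matching $\{(2n,2n+1):n\geq 0\}$ on the bi-infinite line $\mathbb{Z}$, whose negative half-line is unmatched but admits only infinite alternating excursions. Consequently, the construction of $\cG$ must simultaneously preclude Borel perfect matchings and such one-sided Borel matchings.

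I would seek $\cG$ as a Schreier-type graph of a free Borel action of a countable group $\Gamma$ on a standard probability space $(X,\mu)$, with generators chosen so that the orbit structure has a parity obstruction repeated at every scale. The invariance of $\mu$ together with the parity defect should force the unmatched set $U_M$ of any Borel matching $M$ to carry positive $\mu$-measure, which alone rules out Borel perfect matchings. To promote this to the stronger claim, I would apply a mass-transport argument balancing matched and unmatched mass across finite radii in $\cG$, showing that $U_M$ must contain a vertex $v$ at bounded $\cG$-distance from another $M$-unmatched vertex $u$, joined by an $M$-alternating path. Such a path would then provide the required finite augmenting path, contradicting the hypothetical absence of one.

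The main obstacle is designing $\cG$ so that the parity obstruction to matchings manifests locally at every finite scale, rather than escaping to infinity. If the Borel obstruction were only asymptotic, one could always arrange a Borel one-sided matching whose defects lie at the ends of orbits, producing only infinite augmenting paths and not contradicting the Borel structure. Engineering a Borel graph in which the unmatched set of any Borel matching must cluster at bounded radius is the technical heart of the argument, and I expect this to require a delicate coupling between the Borel complexity of $\cG$ and the ergodic or combinatorial properties of its invariant measure---precisely the sort of situation in which the iterative application of Proposition~\ref{borelthm} (removing all $k$-augmenting paths for each fixed $k$) is expected to fail to converge to a Borel matching.
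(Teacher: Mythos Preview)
Your proposal is an outline rather than a proof: you never construct a specific $\cG$, and the crucial step---forcing the unmatched set of an arbitrary Borel matching to contain two vertices at \emph{bounded} $\cG$-distance joined by an $M$-alternating path---is asserted rather than argued. A mass-transport argument can show that the unmatched set has positive measure, but positive measure alone does not yield a uniform bound on the distance between unmatched vertices along alternating paths; your own $\mathbb{Z}$ example shows that positive density of unmatched vertices is compatible with every augmenting path being infinite. You identify this as ``the main obstacle'' and then leave it unresolved, so the proposal does not actually establish the proposition.

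The paper sidesteps this difficulty by taking $\cG$ to be a specific construction of Laczkovich: a $2$-regular Borel graph on $X=[0,1]$ such that every Borel matching $M$ satisfies $\mu(X_M)<1$ for Lebesgue measure $\mu$. The $2$-regularity is the decisive structural feature you are missing. If a Borel matching $M$ had no finite augmenting path, then from each unmatched vertex $x_0$ the (unique, since the graph is $2$-regular) ray $x_0,x_1,x_2,\dots$ would be forced to alternate so that $x_1,x_2,\dots\in X_M$, and rays from distinct unmatched vertices would be pairwise disjoint (any intersection would create a finite augmenting path). Selecting, say, the ray whose first step goes to the smaller neighbor yields Borel maps $T^i:X_0\to[0,1]$ with $\mu(T^i(X_0))=\mu(X_0)$ and pairwise disjoint images for $i\neq j$; hence $\mu(X_0)=0$, contradicting Laczkovich's property. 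Note that the logic runs in the opposite direction from your sketch: rather than showing unmatched vertices must cluster at bounded radius, the paper shows that their \emph{failure} to cluster (no finite augmenting paths) forces the unmatched set to have measure zero, which the chosen $\cG$ forbids.
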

\subsection{Constant-time approximation algorithms}
In practice, the most efficient algorithms are linear algorithms, that is,
 those that take time 
proportional to their input. However, in some cases, even to 
access and store the total input in real time is impossible. 
These technological problems lead to the development of constant-time
algorithms that use only a small (random) fraction of the input and still 
efficiently analyze massive data-sets (see \cite{Rub} for a recent survey).
First, let us explain what we mean by {\it constant-time
algorithms} for bounded-degree graphs.

\noindent
Fix a constant $d>0$ and denote the set of all finite simple graphs
with vertex degree bound $d$ by $Graph_d$. Our goal is to construct a
randomized algorithm that calculates the size of the maximum matching
of graph $G\in Graph_d$ with high precision and high probability in
constant time (that is the running time of
our algorithms should be independent on the
size of the graphs).  In general, let $P:Graph_d\to \R$ be a graph
parameter (e.g. the ratio of the maximal matching, maximal independent
set, minimal vertex cover). A constant time approximation of the
parameter means that for any fixed $\e>0$ we have a tester that takes
a graph $G$ as an input, then
\begin{itemize}
\item It gives random uniform real label to the vertices from the interval
  $[0,1]$.
\item Then the tester randomly picks $C(\e)$ vertices of the graph and explores
the $C(\e)$-neighbourhood of the chosen vertices.
\item Based on the sampling the tester gives an estimate $P_E(G)$ for
the parameter $P(G)$ of the maximum matching  such a way that
$$Prob (|P(G)-P_E(G)|>\e) <\e\,.$$
\end{itemize}
Nguyen and Onak constructed the first constant-time approximation
algorithm for the ratio of the maximum matching that is $m(G) =
\frac{|M(G)|}{|V(G)|}$ where $M(G)$ is a maximum matching.  Note that
such approximation algorithm do not exists for the ratio of the
maximal independent set \cite{Bol} or the minimal vertex cover
\cite{BOT}.

\noindent
How can we see that such testers exist for a certain graph parameter ?
The existence of constant-time algorithms can be translated to a topological
statement using the notion of the Benjamini-Schramm graph convergence
\cite{BS}. 
Let us briefly recall the definition of graph convergence.
A rooted $(r,d)$-ball is a finite, simple, connected graph $H$ such that
\begin{itemize}
\item $\mbox{deg}(y)\leq d$ if $y\in V(H)$\,.
\item $H$ has a distinguished vertex $x$ (the root).
\item $d_G(x,y)\leq r$ for any $y\in V(H)$.
\end{itemize}
For $r\geq 1$, we denote by $U^{r,d}$ the finite set of rooted isomorphism
classes of rooted $(r,d)$-balls. Let $G(V,E)$ be a finite graph with vertex
degree bound $d$. For $\alpha\in U^{r,d}$, $T(G,\alpha)$ denotes the
set of vertices $x\in V(G)$ such that there exists a rooted isomorphism
between $\alpha$ and the rooted $r$-ball $B_r(x)$ around $x$.
 Set $p_G(\alpha):=\frac{|T(G,\alpha)|}
{|V(G)|}\,.$ Thus we associated to $G$ a probability distribution on $U^{r,d}$
for any $r\geq 1$. Let $\{G_n\}^\infty_{n=1}\subset Graph_d$ be a 
sequence of finite
simple graphs such that $\lim_{n\to\infty}
 |V(G_n)|=\infty$. Then $\{G_n\}^\infty_{n=1}$ 
is called
{\it convergent} if for any $r\geq 1$ and $\alpha\in U^{r,d}$,
$\lim_{n\to\infty} p_{G_n}(\alpha)$ exists.
If $P$ is a graph parameter and $\lim_{n\to\infty} P(G_n)$ exists for
any convergent graph sequence then there exists a constant-time algorithm
for the approximation of $P$ (Theorem 3, \cite{ElekRSA}).

\noindent
Our main goal is to prove the following result.
\begin{theorem}~\label{convthm} 
Let $\{G_n\}^\infty_{n=1}\subset Graph_d$ be a convergent graph sequence
 in the sense of Benjamini-Schramm. Then $\lim_{n\to\infty} m(G_n)$ exists.
\end{theorem}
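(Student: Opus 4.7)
The plan is to combine Proposition~\ref{borelthm} with Benjamini--Schramm compactness of $\{G_n\}$ to approximate $m(G)$, up to error $1/(T+1)$, by the density of a matching produced by a local randomised procedure; convergence of local statistics under BS-convergence will then force convergence of $m(G_n)$. The underlying matching-theoretic fact is the classical symmetric-difference bound: if $M$ has no augmenting path of length $\leq T$, then $|M|\geq (1-1/(T+1))|M^*|$ for every other matching $M^*$, since $M\triangle M^*$ decomposes into $M$-augmenting paths each containing $\geq T+1$ edges of $M$.

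First I would realise the BS-limit of $\{G_n\}$ as a bounded-degree Borel graph $\cG_\infty=(X,E)$ with an invariant probability measure $\mu$, encoding the limit of the distributions $p_{G_n}$ on rooted neighbourhoods. Applying Proposition~\ref{borelthm} to $\cG_\infty$ produces a Borel matching $M_T$ with no augmenting paths of length $\leq T$; integrating the symmetric-difference bound against $\mu$ gives
\[ \frac{\mu(X_{M_T})}{2} \geq m^*(\cG_\infty) - \frac{1}{T+1}, \]
where $m^*(\cG_\infty)$ denotes the supremum of Borel matching densities on $\cG_\infty$.

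The heart of the argument is to transport this Borel construction back to the finite $G_n$. I would argue that Proposition~\ref{borelthm} can be realised by an iterative procedure --- augment along maximal vertex-disjoint $\leq T$-augmenting paths, with ties broken by independent uniform $[0,1]$ vertex labels, iterated boundedly many times --- whose decisions at each vertex depend only on a bounded-radius neighbourhood. Running this same procedure on $G_n$ with uniform $[0,1]$ labels produces a matching $M_T^{(n)}$ satisfying
\[ \frac{|M_T^{(n)}|}{|V(G_n)|} \leq m(G_n) \leq \frac{|M_T^{(n)}|}{|V(G_n)|} + \frac{1}{T+1}. \]
Since the expected density of $M_T^{(n)}$ depends only on the bounded-radius local statistics of $G_n$, it converges as $n\to\infty$; standard concentration arguments upgrade this to almost-sure convergence of the density itself. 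Hence $\limsup m(G_n) - \liminf m(G_n) \leq 2/(T+1)$, and letting $T\to\infty$ completes the proof.

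The principal obstacle is establishing the bounded locality of the iterative augmentation. In general finite graphs one cannot uniformly bound the number of rounds required before no short augmenting paths remain. It is precisely the Borel structure of $\cG_\infty$, combined with the random vertex labels, that forces the procedure to stabilise within a bounded number of rounds at $\mu$-typical vertices; by BS-convergence this uniform bound carries over to the $G_n$. This local-algorithm-from-Borel-existence is the ``Borel oracle'' paradigm highlighted in the abstract.
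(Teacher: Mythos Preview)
Your overall architecture matches the paper's: build a Borel matching with no short augmenting paths via Proposition~\ref{borelthm}, transport it to the finite $G_n$ as a local rule, and squeeze $m(G_n)$ using the symmetric-difference bound. The gap is exactly where you locate it yourself, and you do not actually close it. You assert that the iterative augmentation can be realised as a bounded-radius rule because it ``stabilises within a bounded number of rounds at $\mu$-typical vertices'', but this is a restatement of what you need, not an argument for it. Nothing in the proof of Proposition~\ref{borelthm} gives a uniform bound on the stabilisation time; it only shows that each individual edge changes status finitely often. If you truncate the procedure after some fixed $N$ rounds so that it becomes genuinely local, the resulting matching on $G_n$ may still contain short augmenting paths, and then your clean inequality $m(G_n)\le |M_T^{(n)}|/|V(G_n)|+1/(T+1)$ is no longer available.

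The paper sidesteps this entirely: it does not try to make the \emph{procedure} local, it makes the \emph{output} approximately local. The Borel matching $M$ is encoded by a Borel partition $\{X_0,X_{ij}\}$ of $\B_d$ recording, for each matched vertex, which neighbour it is paired with. This partition is then approximated in measure by a clopen (hence finite-radius-determined) partition $\{X_0',X_{ij}'\}$, with symmetric difference $H$ of measure at most $\ep/(d+1)^{2T+1}$. Reading off the clopen rule in each $BG_n$ yields a matching $M_n$ which \emph{may} have short augmenting paths, but any endpoint of such a path has $(r+2T+1)$-neighbourhood type lying in $H^{2T+1}$, and $\mu(H^{2T+1})\le\ep$ by invariance of $\mu$. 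One then invokes the robust form of the symmetric-difference bound, Lemma~\ref{majdnemmax}, which tolerates a fraction $q$ of bad vertices, rather than the clean $q=0$ version you wrote down. This clopen-approximation step, together with the error-tolerant Lemma~\ref{majdnemmax}, is the actual mechanism behind the Borel oracle, and it is what is missing from your sketch.
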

By the previous remark, Theorem \ref{convthm} implies that there exists
a constant-time algorithm for approximating the maximum matching. Note
that our result is highly non-constructive. In \cite{NO} an effective upper
bound is given for the running time in terms of $d$ and $\e$.
To prove the theorem we use Proposition \ref{borelthm} via  the 
{\it Borel oracle machinery}. The
 machinery can be described informally the following way.
 There is one single Borel graph with one given matching
 without short augmenting paths (The Borel Oracle). Sampling a finite 
graph can be
viewed as ``asking'' the oracle about the optimal choice of picking a matching 
in this particular graph.
\vskip 0.2in
\noindent
{\bf Acknowledgement:} We would like to thank Michael Krivelevich for calling
our attention to \cite{NO}.
\section{The proof of Propositions \ref{borelthm} and \ref{borelthm2}}
Let us start with a simple lemma, which is a variation of the classical
observation that a matching in a finite graph without augmenting paths is
always of maximum size (see also Lemma 6 in \cite{NO}).
\begin{lemma}~\label{majdnemmax}
Let $G$ be a finite graph. Let $M$ be a matching such that there are
at most $q\cdot |G|$ vertices from which a
augmenting path shorter than $T$ starts and let $M_m$ be a maximum size
matching of $G$. Then 
\[ \frac{|M|}{|G|} \leq \frac{|M_m|}{|G|} \leq \frac{|M|}{|G|} \cdot
\frac{T+1}{T} + q.\]
\end{lemma}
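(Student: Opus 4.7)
The plan is to apply the classical symmetric-difference argument and then carefully account for short versus long augmenting paths. The lower bound $|M|/|G| \leq |M_m|/|G|$ is immediate from the maximality of $M_m$, so all the content lies in the upper bound.

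First I would consider the symmetric difference $M \triangle M_m$ viewed as a subgraph of $G$. Every vertex has degree at most two in this subgraph (at most one edge from each matching), so its connected components are vertex-disjoint alternating paths and alternating cycles. In any alternating cycle the number of $M$-edges equals the number of $M_m$-edges; the same holds for any alternating path that starts with one matching and ends with the other. An alternating path starting and ending with $M$-edges would be an augmenting path for $M_m$, which cannot exist since $M_m$ is maximum. Hence the only components that contribute to $|M_m|-|M|$ are augmenting paths of $M$, and each such path contributes exactly $+1$. This gives
\[ |M_m|-|M| = \#\{\text{augmenting paths of } M \text{ in } M\triangle M_m\}. \]

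Next I would split these augmenting paths into short ones (length $<T$ in the paper's convention, i.e.\ $k$-augmenting with $k<T$) and long ones (length $\geq T$). Each short augmenting path has two endpoints, each of which is an unmatched vertex from which an augmenting path shorter than $T$ starts; by hypothesis there are at most $q\cdot|G|$ such vertices, so the number of short augmenting paths is at most $q\cdot|G|$. Each long augmenting path contains at least $T$ edges of $M$, and since distinct components of $M\triangle M_m$ are edge-disjoint, the number of long augmenting paths is at most $|M|/T$. Combining,
\[ |M_m|-|M| \leq q\cdot|G| + \frac{|M|}{T}, \]
and dividing by $|G|$ yields the claimed inequality
\[ \frac{|M_m|}{|G|} \leq \frac{|M|}{|G|}\cdot\frac{T+1}{T} + q. \]

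There is no real obstacle here; this is essentially Berge's theorem with quantitative control. The only point requiring care is the translation between the paper's convention (a $k$-augmenting path has $k$ edges of $M$ and $k+1$ edges outside $M$) and the counting step: it is precisely the $k\geq T$ paths that use at least $T$ edges of $M$, which is what produces the $1/T$ factor in the bound.
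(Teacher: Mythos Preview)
Your proof is correct and follows essentially the same route as the paper: analyze the components of $M\triangle M_m$, identify the odd alternating paths as augmenting paths for $M$, and split them into short ones (bounded by $q|G|$ via their endpoints) and long ones (bounded by $|M|/T$ via their $M$-edges). The only cosmetic difference is that the paper bounds the number of short paths using one endpoint rather than two, but the resulting inequality is identical.
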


\begin{proof} Let $Q \subset G$ denote those vertices from which 
a short augmenting path starts. Consider the symmetric difference of the
edge set of $M$ and $M_m$. This clearly consist of disjoint paths and
cycles in which the edges coming from $M$ and $M_m$ are
alternating. Hence cycles have to be of even length. In each cycle and
in each path of even length there are just as many edges coming from
$M$ as from $M_m$. On the other hand, if $x_0,x_1,\dots,x_{2k+1}$ is a
path of odd length then $x_0x_1$ must come from $M$ (and then
$x_{2k}x_{2k+1}$ also), otherwise it would be an augmenting path for
$M_m$ that is clearly impossible. But then it must be an augmenting
path for $M$ and so either $x_0 \in Q$ or $k \geq T$ by our
assumption. In the first case there are at least $T$ edges of
$M$ in the path. Thus we have
\[ |M_m| - |M| = \mbox{number of odd paths in $M \triangle M_m$} \leq
|Q| + \frac{|M|}{T}\,.\] Hence the lemma follows.
\end{proof}

We start the proof of Proposition \ref{borelthm} by analysing the case 
of one single
countable, connected graph. The construction for this case will be
used in the case of Borel graphs too.

\begin{lemma}
Let $G(V,E)$ be a countable, connected graph with degree bound $d$, and let
$T > 0$. Then $G$ contains a matching $M$ that has no $k$-augmenting paths 
for any $k \leq T$.
\end{lemma}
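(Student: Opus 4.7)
The plan is to build $M$ as a limit of matchings on larger and larger finite subgraphs. Fix a basepoint $v_0 \in V$ and let $V_n$ be the ball of radius $n$ around $v_0$, which is finite because $G$ has bounded degree. Let $G_n := G[V_n]$ be the induced subgraph. On each $G_n$, produce a matching $M_n$ that has no augmenting path of length $\leq T$ as follows: start with $M_n^{(0)} = \emptyset$ and, as long as there is an augmenting path of length $\leq T$, augment along it. Each augmentation strictly increases the size of the matching, so this process terminates in at most $|V_n|/2$ steps, and the resulting $M_n$ has the required property inside $G_n$.

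Next I would extract a limit. Since $E$ is countable, $\{0,1\}^E$ with the product topology is compact and metrizable, so the sequence $(M_n)$ (viewed as indicator functions on $E$) has a subsequence that converges pointwise on every edge; by passing to this subsequence I may assume $M_n \to M$ pointwise for some $M \subseteq E$. The limit $M$ is automatically a matching: for any vertex $v$, each $M_n$ contains at most one edge incident to $v$, so the same holds for $M$.

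It remains to check that $M$ has no $k$-augmenting path for $k \leq T$. Suppose, for contradiction, that $P = x_0 x_1 \dots x_{2k+1}$ is such a path in $G$. Let $S$ be the finite set consisting of the vertices of $P$ together with all their neighbors in $G$, and let $F$ be the finite set of edges of $G$ incident to vertices of $P$. Choose $n$ so large that $S \subseteq V_n$ and $M_n$ agrees with $M$ on every edge of $F$; this is possible by pointwise convergence on the finite set $F$ and by the definition of $V_n$ as a ball. Then inside $G_n$ the edges of $P$ alternate between non-$M_n$ and $M_n$ exactly as for $M$, and the endpoints $x_0, x_{2k+1}$ are unmatched by $M_n$ (since every edge of $G$ incident to them lies in $F$, and $M_n$ coincides with $M$ on $F$). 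Hence $P$ is an augmenting path of length $k \leq T$ for $M_n$ in $G_n$, contradicting the choice of $M_n$.

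The only delicate point is this last step: one must take $V_n$ large enough that the status ``matched/unmatched'' of the endpoints of $P$ with respect to $M_n$ coincides with their status with respect to $M$. This is exactly why I enlarge to the neighborhood $S$ of $P$ and use pointwise convergence on the finite edge set $F$. Everything else is standard compactness plus the elementary fact that augmentation along short paths terminates in a finite graph.
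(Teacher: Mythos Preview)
Your argument is correct. The compactness step, the verification that the limit is a matching, and the contradiction at the end all go through as written; in particular, your care in enlarging to the full neighbor set $S$ and the incident edge set $F$ is exactly what is needed to certify that the endpoints of $P$ are genuinely unmatched by $M_n$.

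However, your route is genuinely different from the paper's. The paper does \emph{not} pass to a subsequential limit via compactness. Instead it fixes an enumeration $v_1,v_2,\dots$ of $V$ in which every vertex appears infinitely often, starts from $M_0=\emptyset$, and at step $i$ exhaustively improves the current matching by short augmenting paths lying inside the finite ball $B_{4T}(v_i)$. It then argues that the resulting sequence $M_i$ \emph{stabilizes} edgewise (each change near a fixed edge strictly increases the number of matched vertices in a fixed finite ball), so the pointwise limit exists without any subsequence extraction and is easily seen to have no short augmenting paths. Your approach is shorter and more elementary for the countable lemma itself, but the paper's approach is chosen with the next proposition in mind: it is a \emph{local, deterministic} procedure that can be run in parallel on disjoint $4T$-balls, which is precisely what is needed to upgrade the construction to a Borel matching (by processing one colour class at a time under a Borel colouring). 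A compactness/subsequence argument does not transfer to the Borel setting, since the choice of subsequence is non-canonical and there is no reason the limit should be Borel.
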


\begin{proof}
Let us enumerate the vertices of $G$ such that each vertex is listed
infinitely many times. Let $v_1,v_2,\dots$ be this enumeration.  We
construct $M$ as a limit of a sequence of matchings
$M_0,M_1,M_2,\dots$. We start with the empty matching $M_0$. In the
$i$-th step we take $M_{i-1}$. We look at the $4T$-neighborhood
$B_{4T}(v_i)$ of $v_i$ in $G$. This is a finite subgraph of size at most
$(d+1)^{4T}$. We look for $k$-augmenting paths
for $M_{i-1}$ of length at most $k \leq T$ in this subgraph, and
improve $M_{i-1}$ every time we find one. Since each improvement
increases the number of edges of $M_{i-1}$ in $B_{4T}(v_i)$, this
process terminates in a finite number of steps. Let $M_i$ denote the
improved matching.

\begin{claim} The sequence $M_i$ stabilizes in the sense that for each
  edge $(xy) \in E$ there is an integer $N$ such that $(xy) \in M_n$
  for all $n \geq N$ or $(xy) \notin M_n$ for all $n \geq N$. Hence we
  can define $M$ to be the (edge-wise) limit of $M_i$.
\end{claim}

To see this it is enough to show that the ``status'' of a fixed edge
can change only finitely many times during the process. Each change of
status involves improving by an augmenting path of length at most
$T$. The endpoints of such a path necessarily lie in $B_{2T+1}(x)$. At
each improvement, the number of matched vertices in this neighborhood
strictly increases. Since the neighborhood is finite, there can be
only finitely many improvements effecting the edge $(xy)$.

\begin{claim} $M$ is a matching without any $k$-augmenting paths for
  any $k \leq T$.
\end{claim}

Suppose there is a $k$-augmenting path $x_0,\dots,x_{2k+1}$ for
$M$. By the precious claim we can choose $N$ so large that for any $n
\geq N$ we have $M|_{B_{4T}(x_0)} = M_n|_{B_{4T}(x_0)}$. But there
will also be an $n \geq N$ for which $v_n = x_0$. Then in the $n$-th
step the path $x_0,\dots,x_{2k+1}$ has to be a $k$-augmenting path for
$M_n$ in the $4T$-neighborhood of $v_n$, so $M_n|_{B_{4T}(x_0)} \neq
M_{n+1}|_{B_{4T}(x_0)}$ which is clearly a contradiction.
A similar (but much simpler) argument shows that $M$ is in fact a matching.
\end{proof}

\begin{remark}
Note that instead of improving one neighborhood in each step,
it is perfectly fine if in each step we improve many (even infinitely
many) disjoint $4T$-neighborhoods.
\end{remark}

\begin{proof}[Proof of Proposition~\ref{borelthm}]
Let us color (in a Borel way) the vertices of our graph $\cG$ with 
finitely many $(K)$ colors
such that any two vertices of the same color have disjoint
$4T$-neighborhoods. This can be done by the result of Kechris, Solecki and
Todorcevic. Indeed, let $\cG'$ be the Borel graph that we obtain
if we connect two vertices in $\cG$ when their distances are not greater than
$4T$ and consider a Borel coloring of $\cG'$ by finitely many colors.

Now we execute the algorithm described in the previous lemma in the
following way: Let $M_0$ be the empty matching. In step $n$ we take
the matching $M_n$ constructed so far, and take the $n$-th color class
modulo $K$. We improve the matching $M_n$ in the $4T$-neighborhood of
each vertex of the color class parallel just as in the lemma. Hence we
obtain a sequence of matchings $M_1,M_2,\dots$. Each $M_n$ will be
Borel, since one step can be carried out in a Borel way: there are
only finitely many different $4T$-neighborhoods together with a
matching. On each such neighborhood-with-matching we can fix a unique
way of improving the matching, and hence each local improvement depends
only on the starting state of the neighborhood of the vertex, which on
the other hand depends on the vertex in a Borel way. There could be
one more issue: the neighborhoods might have symmetries and then
the choice of improvement might not be unique. However, using the colorings
of the vertices we can order the augmented paths in the neighborhoods and
always improve the first augmented paths to break the tie in case of
a symmetry.

Now restricting this sequence to any single connected component of $G$
we get a sequence of matchings that we constructed in the lemma. Hence
the sequence stabilizes and the limit $M$ will be a Borel matching
without any $k$-augmenting paths for any $k \leq T$.
\end{proof}

\vskip 0.2in
Now we prove Proposition \ref{borelthm2}.
Recall that Laczkovich \cite{Lac} constructed a $2$-regular Borel graph 
$\cG(X,E)$
on $X=[0,1]$, such that for any matching $M\subset E$, $\mu(X_M)<1$, where
$\mu$ denotes the Lebesgue measure.
Assume now that $M\subset E$ is a matching without finite
augmenting paths, and let $X_0 = X \setminus X_M$. If $x_0 \in X_0$
and $x_0,x_1,x_2,\dots$ is a path in $\cG$ then $(x_1x_2)$ has to be in
$M$ (since that graph is 2-regular and $x_0x_1$ is not a 0-augmenting
path), then $x_3x_4$ has to be in $M$ for a similar reason, and so
on. It follows that $x_1,x_2,\dots \in X_M$. Also if $x_0 \neq y_0 \in
X_0$ and $y_0,y_1,y_2,\dots$ is a path in $\cG$ then $y_1,y_2,\dots \in
X_M$ and the $x_i$'s are all different from the $y_j$'s otherwise
there would be again a finite augmenting path. Therefore, from every $x \in
X_0$ an infinite path in $X_M$ starts and these paths are all
disjoint. In fact, from each $x$ there are two such paths starting
with either edge incident to $x$. Let us choose the one which starts
with the smaller vertex. Thus we have Borel maps $T^i:X_0\to [0,1]$ such that
if $x_0\in X_0$ then $T^i(x_0)=x_i$. By our construction, 
$\mu(T^i(X_0))=\mu(X_0)$ and $T^i(X_0)\cap T^j(X_0)=\emptyset$ if $i\neq j$. 
Hence $\mu(X_0) = 0$, leading to a contradiction. \qed

\section{Invariant measures on 
Borel graphs as the limits of finite graphs}~\label{bgraphsec}

 In \cite{BS}, Benjamini and Schramm constructed unimodular measures
on the space of connected, countable graphs as the limit objects
of convergent graph sequences. In this paper we consider invariant
measures
on Borel graphs as limit objects. The two notions are closely related (see
Example 9.9, \cite{AL}). 

\noindent
First of all, let us recall the notion of invariant measures on Borel graphs 
from \cite{Kech}. Let $\cG(X,E)$ be a Borel graph. A Borel involution $T:X\to
X$ defines a matching of $\cG$ if for any $x\in X$, $x\neq T(x)$: $x$ is
connected to $T$ in $\cG$. A probability measure on $X$ is $\cG$-invariant if
it is preserved by all Borel involutions $T$ that define a matching in
$\cG$. Note that for the Borel graph in the proof of Proposition
\ref{borelthm2} the Lebesgue measure is an invariant probability measure.

\noindent
Now let us recall the limit object construction from \cite{ElekRSA}. 
 Let $B = \OI^{\N}$ be the Bernoulli space of
  0-1-sequences with the standard product measure $\nu$. A (rooted)
  $B$-graph is a (rooted) graph $G$ and a function $\tau_G : V(G) \to
  B$. Two rooted $B$-graphs $G$ and $H$ are said to be isomorphic if
  there exists a rooted isomorphism $\psi : V(G) \to V(H)$ such that
  $\tau_H(\psi(x)) = \tau_G(x)$ for every $x \in V(G)$. The set of
  isomorphism classes of all countable rooted $B$-graphs with degree
  bound $d$ is denoted by $\BGrd$. In fact it will be more convenient
  to use the subset of this in which the labels are required to be
  different for every vertex in the graph. This subset shall be
  denoted $\B_d$.

Let $U^r = U^{r,d}$ denote the set of isomorphism classes of rooted
$r$-balls with degree bound $d$ and vertices labeled with $\OI^r$. For
a $B$-graph $BG$ and a vertex $x\in V(BG)$ by $B_r(x)\in U^r$
we shall denote the rooted $r$-ball around $x$ with the labels
truncated to the first $r$ digits.  For any $\al \in U^r$ and a
$B$-graph $BG$ we define the set $T(BG,\alpha) \defeq \{x \in V(G):
B_r(x) \cong_B \alpha\}$ and define $p_{BG}(\alpha) \defeq
\frac{|T(BG,\alpha)|}{|V(G)|}$.  For $\al \in U^r$ let us define
$T(\BGrd,\al) = \{x \in \BGrd : B_r(x) \cong \al\}$. There is a
natural metric on $\BGrd$. If $X,Y\in \BGrd$ then
\[d_b(X,Y)=2^{-r}\,,\]
where $r$ is the maximal number such that $B^r(x)\cong B^r(y)$,
where $x$ is the root of $X$, $y$ is the root of $Y$.  The subsets
$T(\BGrd,\al): \al \in U^r, r \in \N$ are closed-open sets and
generate the Borel-structure of $\BGrd$.

Let $\{BG_n\}^\infty_{n=1}$ be a sequence of $B$-graphs.  We say that
$\{BG_n\}^\infty_{n=1}$ converges if for any $\al \in U^r$,
$\lim_{n\to\infty} p_{BG_n}(\alpha)= \mu(T(\BGrd,\al))$ exists. In
this case $\mu$ naturally extends to a Borel-measure on $\BGrd$. We
call $\mu$ the limit measure of $\{BG_n\}^\infty_{n=1}$.

$\BGrd$ can be given a Borel graphing structure $\cG$ in a natural
way: two rooted $B$-graphs $BG$ and $BH$ with roots $x \in BG, y \in
BH$ are adjacent if there is a vertex $z \in BG$ adjacent to $x$ such
that $BG$ with root $z$ is isomorphic to $BH$ with root $y$. Obviously
this is a Borel graph with degree bound $d$. The following proposition is the
straightforward consequence of Proposition 2.2 and Corollary 3.1 of
\cite{ElekRSA}( see also Example 9.9 of \cite{AL}).
\begin{propo} \label{enlemmam}
Let $\{G_n\}^\infty_{n=1}$ be a convergent graph sequence.  Let
$\{BG_n\}^\infty_{n=1}$ be a uniformly random $B$-labelling of the
vertices of $G_n$. Then $\{BG_n\}^\infty_{n=1}$ almost surely
converges to a measure $\mu$ which is concentrated entirely on
$\B_d$, and restricted to this subset it is $\cG$-invariant.
\end{propo}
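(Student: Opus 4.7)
The statement has three components: almost sure convergence of $\{BG_n\}$, concentration of the limit $\mu$ on $\B_d$, and $\cG$-invariance of $\mu$. For convergence, fix $r \geq 1$ and $\alpha \in U^r$; the quantity $p_{BG_n}(\alpha)$ is an average of the indicators $\mathbf{1}[B_r(v) \cong_B \alpha]$ whose expectation factors as $\sum_\beta p_{G_n}(\beta)\, q(\alpha,\beta)$, with $\beta$ ranging over unlabelled $r$-balls and $q(\alpha,\beta)$ the fixed probability that a uniform $B$-labelling of $\beta$ realises $\alpha$. Benjamini--Schramm convergence of $\{G_n\}$ forces each $p_{G_n}(\beta)$ to converge, and hence so does the expectation. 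Two indicators attached to vertices at distance greater than $2r$ depend on disjoint blocks of independent label bits, so McDiarmid's bounded-difference inequality yields the exponential concentration
\[ \Pr\bigl(|p_{BG_n}(\alpha) - \mathbb{E}\, p_{BG_n}(\alpha)| > \varepsilon\bigr) \leq 2\exp\!\bigl(-c\,\varepsilon^2\,|V(G_n)|/(d+1)^{2r}\bigr). \]
Borel--Cantelli applied over the countable family of pairs $(r,\alpha)$ gives simultaneous almost sure convergence, which defines the limit measure $\mu$ on the clopen cylinder base of $\BGrd$.

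For the support statement, each vertex of $G_n$ receives an independent label from the atomless product measure on $\OI^\N$, so any two prescribed labels coincide with probability $0$; a union bound over $V(G_n)^2$ then forces $BG_n \in \B_d$ almost surely. Quantitatively, let $C_{r,s} \subset \BGrd$ denote the cylinder set of $B$-graphs in which two distinct vertices of the root's $r$-ball agree on their first $s$ label bits; then $p_{BG_n}(C_{r,s}) \leq \binom{(d+1)^r}{2} 2^{-s}$, so $\mu(C_{r,s}) \leq \binom{(d+1)^r}{2} 2^{-s}$. Letting $s \to \infty$ with $r$ fixed kills the $r$-local collisions, and a union over $r$ shows $\mu(\B_d^c) = 0$.

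For invariance, let $T:\BGrd \to \BGrd$ be a Borel involution defining a matching in $\cG$, and let $A = T(\BGrd,\alpha)$ be a cylinder. Since $T(BG)$ is either $BG$ itself or the same labelled graph with root moved to one of its at most $d$ neighbours according to a Borel rule, standard Borel approximation produces \emph{finitely determined} involutions $T_k$ whose action $T_k(BG)$ depends only on $B_k(\text{root})$ and which satisfy $\mu(\{T_k \neq T\}) \to 0$. For any labelling of $G_n$ the involution $T_k$ pulls back to a bijection of $V(G_n)$, which preserves the uniform counting measure and so yields the \emph{exact} identity $p_{BG_n}(T_k^{-1}A) = p_{BG_n}(A)$. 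Sending $n \to \infty$ gives $\mu(T_k^{-1}A) = \mu(A)$; taking $k \to \infty$ together with $\mu(\{T_k \neq T\}) \to 0$ recovers $\mu(T^{-1}A) = \mu(A)$, and a monotone class argument extends this to all Borel $A$. The principal obstacle is producing the approximating sequence $T_k$ of finitely determined involutions that are genuinely Borel and jointly consistent with $T$ off a null set; this is the Borel approximation step, exploiting the fact that a Borel map into the finite set of root-neighbours is a countable union of cylinder-determined pieces, after which the rest of the argument is measure-theoretic bookkeeping driven by counting-measure invariance under vertex involutions.
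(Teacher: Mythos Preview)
The paper does not actually prove this proposition; it simply cites Proposition~2.2 and Corollary~3.1 of \cite{ElekRSA} and Example~9.9 of \cite{AL}. Your sketch therefore already supplies more than the paper does, and its three-part architecture --- concentration of $p_{BG_n}(\alpha)$ about its mean, a collision estimate for the support, and local approximation of the involution $T$ for invariance --- is the right one and in line with what those references do.

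There is, however, one genuine gap in the first part. Your McDiarmid bound is correct, but invoking Borel--Cantelli requires $\sum_n \exp\bigl(-c\,\varepsilon^2 |V(G_n)|/(d+1)^{2r}\bigr)<\infty$, and the only standing hypothesis on the sequence is $|V(G_n)|\to\infty$, with no rate. If $|V(G_n)|\asymp\log n$ and the labellings are independent across $n$, take $G_n$ to be a disjoint union of edges: for a suitable $\alpha\in U^1$ the second Borel--Cantelli lemma then shows that $p_{BG_n}(\alpha)$ almost surely fails to converge, so the proposition as literally phrased is in fact false in that regime. What survives --- and what Section~4 actually uses --- is that the \emph{deterministic} measure $\mu$ given by $\mu(T(\BGrd,\alpha))=\lim_n\mathbb{E}\,p_{BG_n}(\alpha)$ exists, is supported on $\B_d$, and is $\cG$-invariant, and that \emph{some} labelling realises convergence to it; your concentration estimate does give convergence in probability to this $\mu$, from which a single good labelling can be extracted by a diagonal selection. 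For the invariance step, the construction you flag as the ``principal obstacle'' can be completed as follows: approximate the Borel partition $\{x:T\text{ moves the root to its }i\text{-th neighbour}\}_{0\le i\le d}$ by a clopen partition determined by $B_k$, let $S_k$ be the resulting local root-move, and symmetrise by setting $T_k(x)=S_k(x)$ when $S_k(S_k(x))=x$ and $T_k(x)=x$ otherwise. This $T_k$ is a genuine finitely-determined involution, it pulls back to a bijection of $V(G_n)$ exactly as you claim, and $\mu(\{T_k\neq T\})\to 0$; with that detail in hand your pull-back argument goes through.
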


\section{Proof of Theorem~\ref{convthm}}

In this section $\cG$ denotes the the natural Borel graph on $\B_d$.
Let $\{G_n\}^\infty_{n=1}$ be a convergent graph sequence. Then by
Proposition~\ref{enlemmam} there is a $B$-labeling of the vertices so
that the resulting $B$-graph sequence $\{BG_n\}^\infty_{n=1}$ converges to a
$\cG$-invariant measure $\mu$ on $\B_d$. 

\noindent
Let us fix an $\ep > 0$ and choose an integer $T > 1/\ep$. By
Proposition~\ref{borelthm} there exists a Borel matching $M$ of the
graphing $\cG$ that has no $t$-augmenting paths for any $t \leq T$. 

\noindent
This matching $M$ is our Borel oracle.
Using $M$ we will be able to construct matchings $M_n$ of $G_n$ that
almost have the same property. Any point $x \in \B_d$ represents a
labeled, rooted graph. Let $l(x)$ denote the label of the root of this
graph. If the neighbors of $x$ are $y_1,y_2,\dots,y_s$ then
$l(y_1),\dots,l(y_s)$ are exactly the labels of the neighbors of the
root of $x$. Since all the labels of the graph are different, we can
order $y_1,\dots,y_s$ according to their $l$-values. If $l(y_1) <
l(y_2) < \dots < l(y_s)$ then $y_i$ will be called the $i$-th neighbor
of $x$. Let $X_0 = \B_d \setminus (\B_d)_M$ and let 
\begin{multline*}X_{ij} = \{x \in (\B_d)_M : \mbox{ $x$ is connected
    to its $i$-th neighbor in the} \\ \mbox{matching $M$ and $x$ is
  the $j$-th neighbor of its own pair}\}.
\end{multline*}

Therefore if $xy \in M$ and $x \in X_{ij}$ then $y \in X_{ji}$. This way we
get a Borel partition of $\B_d$ into $d^2+1$ sets. It is easy to see
that this partition can be approximated by another one (denoted by
$X_{ij}'$) in which each part is a finite union of closed-open
sets, and that the total symmetric difference $H = \cup_{1\leq i,j \leq d}
 (X_{ij}
\triangle X_{ij}') \cup (X_0 \triangle X_0')$ is so small that $\mu(H)
\leq \frac{\ep}{(d+1)^{2T+1}}$. 
Since the approximating sets are finite unions of
closed-open sets this means that there is an $r$ such that for any $x$
the neighborhood $B_r(x)$ determines which $X_{ij}'$ contains
$x$. Since we are in $\B_d$, by choosing $r$ large enough we may
assume that we only use such neighborhoods in which all the $r$-digit
labels are different. (More precisely those neighborhood types for
which this does not hold can be put into $H$.)

Now let us look at a vertex $v \in BG_n$, and look at its $B_r(v)$
neighborhood. Let us find the unique $X_{ij}'$ (or $X_0'$) that
contains the points having this neighborhood.  If it is in $X_{ij}'$
then we look at its $i$-th neighbor $w$, and look at $B_r(w)$. If
$B_r(w)$ is contained in $X_{ji}'$ and $v$ is indeed the $j$-th
neighbor of $w$ then we match $v$ to $w$. Otherwise we don't match $v$
at all. This way we obtain a matching $M_n$ of $BG_n$ (and of $G_n$ of
course). Indeed, since $\{X'_{ij}\}\cup X'_0$ is a partition the
degrees of any vertex in the subgraph $M_n$ can be either $0$ or $1$.

We claim that there cannot be too many vertices that are endpoints of
a $t$-augmenting path in the matching $M_n$ if $t \leq T$. Let
$v_0,v_1,\dots,v_{2t+1}$ be a $t$-augmenting path in $BG_n$. Let $\al
\in U^{r+2t+1}$ denote the isomorphism type of
$B_{r+2t+1}(v_0)$.

\begin{claim}
If $x_0 \in \B_d$ and
$B_{r+2t+1}(x_0) \equiv \al$, then $x_0\in H^{2t+1}$ where $H^{2t+1}$ 
denotes the
set of points that are at most $2t+1$ steps from $H$ in the Borel graph
$\cG$. 
\end{claim}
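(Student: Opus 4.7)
My plan is to argue by contrapositive: assume $x_0 \notin H^{2t+1}$, meaning no point within $\cG$-distance $2t+1$ of $x_0$ lies in the exceptional set $H$, and then derive a contradiction by lifting the $M_n$-augmenting path $v_0,\ldots,v_{2t+1}$ to a genuine $t$-augmenting path for the Borel oracle matching $M$ in $\cG$, contradicting Proposition~\ref{borelthm}.

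First I would fix a rooted labeled isomorphism $\varphi: B_{r+2t+1}(v_0)\to B_{r+2t+1}(x_0)$ witnessing $B_{r+2t+1}(x_0)\cong\alpha$ and set $x_i=\varphi(v_i)$ for $0\le i\le 2t+1$. Each $x_i$ lies within distance $2t+1$ of $x_0$, hence outside $H$ by assumption; moreover any neighbor of $x_i$ also lies within distance $2t+2$ of $x_0$\ldots to stay safely within $H^{2t+1}$'s complement I would note that actually the only neighbors of $x_i$ relevant to the matching rule are those appearing inside $B_{r+2t+1}(x_0)$, since $\varphi$ restricts to an isomorphism $B_r(v_i)\to B_r(x_i)$ for every $i$ (because $r+(2t+1-i)\le r+2t+1$), so the $a$-th neighbor of $v_i$ corresponds to the $a$-th neighbor of $x_i$ for every slot used by the matching rule.

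Next I would check that the $M_n$-status of $v_i$ equals the $M$-status of $x_i$. The approximating sets $X_{ij}'$ and $X_0'$ are unions of closed-open sets determined by $B_r(\cdot)$, so $v_i$ is assigned type $X_{ij}'$ (resp.\ $X_0'$) by the algorithm defining $M_n$ iff $x_i\in X_{ij}'$ (resp.\ $x_i\in X_0'$). Because $x_i\notin H$ we have $X_{ij}$ and $X_{ij}'$ agreeing at $x_i$, and similarly for $X_0$ vs $X_0'$. The same is true for the would-be partner of $x_i$ (its relevant neighbor), which also lies outside $H$. Combining these two facts one checks that the unique matching $M^*$ produced by running the matching rule on $\cG$ using the primed partition coincides with the genuine Borel matching $M$ at every $x_i$; consequently the edge $(x_i,x_{i+1})$ lies in $M$ iff $(v_i,v_{i+1})$ lies in $M_n$, and $x_0$ and $x_{2t+1}$ are $M$-unmatched iff $v_0$ and $v_{2t+1}$ are $M_n$-unmatched.

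It follows that $(x_0,x_1,\ldots,x_{2t+1})$ is a $t$-augmenting path for $M$ in $\cG$ with $t\le T$, contradicting Proposition~\ref{borelthm}. The only subtle point I would write out carefully is that a vertex can be unmatched under the primed rule for two distinct reasons (being of $X_0'$-type, or being of $X_{ij}'$-type but with its $i$-th neighbor failing the reciprocal condition); the second case is ruled out at $v_0$ and $v_{2t+1}$ precisely because the corresponding would-be partners in $\cG$ lie outside $H$, so their $X'$- and $X$-memberships coincide and force the reciprocal condition to hold. This is the only place where the $2t+1$ in $H^{2t+1}$ is actually used, and it is what makes the choice of radius $r+2t+1$ for $\alpha$ the right one.
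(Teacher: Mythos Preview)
Your proof is essentially the paper's own argument, only spelled out in more detail: both lift the augmenting path $v_0,\ldots,v_{2t+1}$ to a path $x_0,\ldots,x_{2t+1}$ in $\cG$, assume no $x_i$ lies in $H$, and deduce a $t$-augmenting path for $M$, contradicting its construction. One small caveat (which the paper equally glosses over): your assertion that the would-be $M$-partner of $x_{2t+1}$ lies outside $H$ is not guaranteed by $x_0\notin H^{2t+1}$, since that partner may sit at distance $2t+2$ from $x_0$ (and its $B_r$-ball is not determined by $\alpha\in U^{r+2t+1}$); the clean fix is to work with radius $r+2t+2$ and conclude $x_0\in H^{2t+2}$, which changes nothing in the subsequent estimate.
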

Indeed, lift the path
$v_0,v_1,\dots,v_{2t+1}$ in $BG_n$ to a path $x_0,x_1,\dots,x_{2t+1}$
in $\B_d$ by always finding the appropriate neighbor of the vertices
according to the ordering respectively. If no point of the lifted path
falls into $H$, then by the construction the matching $M$ looks
precisely the same along this path, as $M_n$ looked like along the
$v$-path in $BG_n$. However this is not possible, as $M$ does not
contain a $t$-augmenting path. Hence one of the $x_i$'s has to be in
$H$ or in other words $x_0 \in H^{2t+1}$, proving the claim.

\vskip 0.1in
\noindent
We will need the following lemma.
\begin{lemma}
$\mu(H^{2T+1})\leq  (d+1)^{2T+1}\cdot \mu(H)\,.$
\end{lemma}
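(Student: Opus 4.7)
My plan is to prove, by induction on $k$, the stronger estimate $\mu(A^k) \leq (d+1)^k \mu(A)$ for every Borel $A \subseteq \B_d$ and every $k \geq 0$ (where $A^k$ denotes the set of points at $\cG$-distance at most $k$ from $A$), and then apply this with $A = H$ and $k = 2T+1$.

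The inductive step reduces to the base estimate $\mu(A^1) \leq (d+1)\mu(A)$, which in turn follows from $\mu(N(A)) \leq d \mu(A)$, where $N(A) = \{y \in \B_d : \exists x \in A, (x, y) \in E\}$. Indeed, $A^1 = A \cup N(A)$ then gives $\mu(A^1) \leq (d+1)\mu(A)$, and since $A^{k+1} = (A^k)^1$ the induction goes through.

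To establish $\mu(N(A)) \leq d \mu(A)$, I invoke the mass transport principle for the $\cG$-invariant measure $\mu$: for every nonnegative Borel function $h$ defined on the set of directed edges of $\cG$,
\[
\int \sum_{y \sim x} h(x, y) \, d\mu(x) = \int \sum_{x \sim y} h(x, y) \, d\mu(y).
\]
Taking $h(x, y) = \mathbf{1}_{y \in A}$, the right-hand side equals $\int \deg(y) \mathbf{1}_{y \in A} \, d\mu(y) \leq d\mu(A)$, while the left-hand side is at least $\mu(N(A))$, since each $x \in N(A)$ contributes at least one term equal to $1$ to the inner sum. This yields $\mu(N(A)) \leq d \mu(A)$ and completes the argument.

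The main obstacle is justifying the mass transport identity purely from $\cG$-invariance of $\mu$. This is standard: the edge set of any bounded-degree Borel graph admits a partition into countably many Borel matchings (obtained by iterated Borel maximal matchings, which exist by applying the Kechris--Solecki--Todorcevic theorem to the line graph of $\cG$ just as in the proof of Proposition~\ref{borelthm}); the involution associated with each such matching preserves $\mu$, and the identity for the indicator of a single matching follows from the measure-preservation of this involution by a direct unraveling of the definitions. Summing over the countable decomposition yields the identity for an arbitrary nonnegative Borel $h$.
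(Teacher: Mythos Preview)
Your proof is correct. Both your argument and the paper's reduce by induction to the one-step bound $\mu(A^1)\leq (d+1)\mu(A)$, and both ultimately rest on the same mechanism: decompose the edge set of $\cG$ into countably many Borel matchings and use that the involution attached to each matching preserves $\mu$. The paper carries this out by hand --- it fixes a Borel edge-colouring by naturals (citing \cite{Kech}), partitions $H$ according to the set of colours that lead out of $H$, and bounds the outside neighbours colour by colour. You instead package the same invariance as the mass-transport identity and apply it once with $h(x,y)=\mathbf 1_{y\in A}$. Your route is shorter and more conceptual (the MTP is a known reformulation of invariance for graphed equivalence relations), while the paper's version is more self-contained and avoids introducing the MTP machinery. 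One minor remark: your pointer to ``the proof of Proposition~\ref{borelthm}'' for the existence of the matching decomposition is slightly off --- that proof does not construct an edge-colouring --- but the fact itself (a Borel proper edge-colouring with finitely many colours, via KST applied to the line graph, which has degree at most $2d-2$) is standard and is exactly what the paper invokes from \cite{Kech}.
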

\proof
It is enough to prove that $\mu(H^1)\leq (d+1) \mu(H)$. First, let us
consider a Borel coloring of the edges by the natural numbers (see
\cite{Kech}). The set $H_n\subset H$ is defined as follows.
A point $p\in H$ is in $H_n$ if and only if there is a point $q\notin H$
such that $(p,q)\in E$ and the color of $(p,q)$ is $n$. The set of
all such points $q$ will be denoted by $K_n$. By the invariance of $\mu$,
$\mu(H_n)=\mu(K_n)$. 
For a $k$-tuple $Q=\{i_1,i_2,\dots,i_k\}$, where $0\leq k\leq d$ and
$i_1< i_2<\dots < i_k$, let $H_Q$ be defined as the points in $H$
that are exactly in the sets $H_{i_1}, H_{i_2}, \dots, H_{i_k}$.
Clearly, $\cup_Q H_Q=H.$

\noindent
Now let $K^{i_j}_Q$ be the set of points outside $H$ that are connected
to a point in $H_Q$ by an edge colored by $i_j$.
By the invariance of $\mu$, $\mu(\cup_{i_j\in Q}K^{i_j}_Q)\leq d\mu(H_Q)$.
Since
$H^1= H\cup (\bigcup_Q\bigcup_{i_j\in Q} K^{i_j}_Q)$, our lemma follows.
\qed

\vskip 0.2in
\noindent
We call $v \in BG_n$ a bad point if there is a $t$-augmenting path (for
any $t \leq T$)
starting from $v$ and denote by $Q_n \subset BG_n$ the set of bad
points and let $q_n = |Q_n|/|V(G_n)|$.  Let $A \subset
U^{r+2T+1}$ denote the set of neighborhood-types of this size
of all the bad points in all the $BG_n$'s. Then by the previous claim
$\cup_{\al \in A} T(\B_d,\al) \subset H^{2T+1}$.
Hence we have 
\begin{multline*} \limsup q_n \leq \limsup \sum_{\al \in A}
  p_{BG_n}(\al) = \sum_{\al \in A} \lim p_{BG_n}(\al) = \\ =
  \mu(\cup_{\al \in A} T(\B_d,\al)) \leq \mu(H^{2T+1}) \leq
  (d+1)^{2T+1}\cdot \mu(H) \leq \ep 
\end{multline*}

Let us introduce the notations $s_n := |M_n| / |V(G_n)|$ and $m_n := m(G_n)$. 
\begin{claim} $\lim_{n\to\infty} s_n=S$ exists.
\end{claim}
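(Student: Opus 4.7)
The plan is to observe that the matching $M_n$ is defined \emph{locally}: whether a vertex $v \in BG_n$ is matched by $M_n$, and if so to which neighbour, depends only on the isomorphism type of the labelled $(r+1)$-neighbourhood of $v$. Membership of a point $x \in \B_d$ in any of the approximating sets $X_{ij}'$ or $X_0'$ is determined by $B_r(x)$, since these sets are finite unions of closed-open subsets of $\B_d$ at depth $r$. Running the matching rule at $v$ involves: (i) determining which $X_{ij}'$ (or $X_0'$) contains $v$, which uses $B_r(v)$; and (ii) in case $v \in X_{ij}'$, locating the $i$-th neighbour $w$ of $v$, testing whether $B_r(w) \in X_{ji}'$, and checking whether $v$ is indeed the $j$-th neighbour of $w$. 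All of this information is encoded in $B_{r+1}(v)$.

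Consequently, there is a function $f \colon U^{r+1} \to \{0,1\}$, the indicator that the root is matched by the rule, such that
\[ s_n \;=\; \frac{|M_n|}{|V(G_n)|} \;=\; \frac{1}{2}\sum_{\alpha \in U^{r+1}} f(\alpha)\, p_{BG_n}(\alpha), \]
the factor $1/2$ accounting for the fact that each edge of $M_n$ contributes two matched endpoints. Since $U^{r+1}$ is a finite set and, by Proposition~\ref{enlemmam}, $p_{BG_n}(\alpha) \to \mu(T(\B_d,\alpha))$ for every $\alpha$, we may pass to the limit term by term and obtain
\[ \lim_{n\to\infty} s_n \;=\; \frac{1}{2}\sum_{\alpha \in U^{r+1}} f(\alpha)\, \mu(T(\B_d,\alpha)) \;=:\; S. \]
Note that $2S$ equals the $\mu$-measure of the set of points in $\B_d$ that are matched by the rule induced by the partition $\{X_{ij}'\} \cup \{X_0'\}$, which provides a conceptual interpretation of $S$.

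The only mild subtlety is to make precise the claim that one radius $r+1$ suffices to read off the full matching decision at $v$, including the case when $v$ fails to be matched because its candidate partner $w$ violates the compatibility condition; but this is immediate from the fact that the compatibility test uses only $B_r(v)$, the identity of the $i$-th neighbour $w$, and $B_r(w)$. Everything else is a routine consequence of Benjamini--Schramm convergence of the $B$-labelled sequence $\{BG_n\}_{n=1}^{\infty}$ to $\mu$.
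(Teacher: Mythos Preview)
Your argument is correct and follows exactly the same idea as the paper's own proof, which consists of the single observation that whether a vertex is matched depends only on its $(r+1)$-neighbourhood type; you have simply spelled this out in detail, expressing $s_n$ as a finite linear combination of the statistics $p_{BG_n}(\alpha)$ and invoking convergence of $\{BG_n\}$. There is no substantive difference in approach.
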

Indeed, by definition being a vertex in the matching depends only the
$r+1$-neighborhood type of the given vertex.

\vskip 0.2in
\noindent
By Lemma~\ref{majdnemmax} we have 
$s_n \leq m_n \leq s_n \cdot \frac{T+1}{T} + q_n$. Hence 
\begin{multline*} S=\liminf s_n \leq \liminf m_n \leq 
\limsup
m_n \leq \\ \leq \frac{T+1}{T} \limsup s_n + \limsup q_n \leq \frac{T+1}{T}
S+ \ep \end{multline*} and thus
$$\limsup m_n - \liminf m_n \leq \frac{1}{T} S+\epsilon\leq 3\ep\,.$$

Since $\ep$ can be arbitrarily small, it follows that the sequence
$\{m(G_n)\}^\infty_{n=1}$ is convergent.
\qed

\end{document}